\theoremstyle{thmstyleone}%
\newtheorem{theorem}{Theorem}%
\newtheorem{proposition}[theorem]{Proposition}%
\theoremstyle{thmstyletwo}%
\theoremstyle{thmstylethree}%
\newtheorem{ex}{Example}
\newtheorem{lem}{Lemma}
\newtheorem{cor}{Corollary}
\newcommand{\T}{\mathbb{P}}
\newcommand{\E}{\mathbb{E}}
\newcommand{\vphi}{\varphi}
\newcommand\dist{\buildrel d \over =}
\newcommand\al{\alpha}
\newcommand\la{\lambda}
\newcommand\ka{\kappa}
\def\leq{\leqslant}
\newcommand{\M}{\mathcal{M}}
\begin{document}

\title[Exact survival probability in E. Sparre Andersen's model]{On the exact survival probability by setting discrete random variables in E. Sparre Andersen's model}

\author*[1]{\fnm{Andrius} \sur{Grigutis}}\email{andrius.grigutis@mif.vu.lt}

\affil*[1]{\orgdiv{Institute of Mathematics}, \orgname{Vilnius university}, \orgaddress{\street{Naugarduko 24}, \city{Vilnius}, \postcode{LT-03225}, \country{Lithuania}}}

\abstract{In this work, we propose a simplification of the Pollaczek–Khinchine formula for the ultimate time survival (or ruin) probability calculation in exchange for a few assumptions on the random variables which generate the renewal risk model. More precisely, we show the expressibility of the distribution function 
$$
\mathbb{P}\left(\sup_{n\geqslant1}\sum_{i=1}^{n}(X_i-c\theta_i)<u\right),\,u\in\mathbb{N}_0
$$
via the roots of the probability generating function $G_{X-c\theta}(s)=1$, the expectation $\mathbb{E}(X-c\theta)$, and the probability mass function of $X-c\theta$. We assume that the random variables $X_1,\,X_2,\,\ldots$ and $c\theta_1,\,c\theta_2,\,\ldots$ are independent copies of $X$ and $c\theta$ respectively, $c>0$, $X$ and $c\theta$ are independent non-negative and integer-valued, and the support of $\theta$ is finite. We give few numerical outputs of the proven theoretical statements when the mentioned random variables admit some particular distributions.
}

\keywords{Ruin theory, renewal theory, random walk, survival probability, generating function, Pollaczek–Khinchine formula, initial values}


\pacs[MSC Classification]{60G50, 60J80, 91G05}

\maketitle

\section{Introduction}\label{sec:intr}

In applied probability, the following stochastic process
\begin{align}\label{eq:process}
W(0):=u,\,W(t):=u+ct-\sum_{i=1}^{N(t)}X_i,\,t>0,
\end{align}
is well known. Here $u\geqslant 0$, $c>0$, the non-negative random variables $X_1,\,X_2,\,\ldots$ are independent copies of $X$, and
$$
N(t)=\max\{n\in\mathbb{N}:\,\theta_1+\theta_2+\ldots+\theta_n\leqslant t\}
$$
is the renewal process generated by the non-negative random variables $\theta_1,\,\theta_2,\,\ldots$, which are the independent copies of $\theta$. 

The model \eqref{eq:process} is often met in queuing theory arguing that it represents the G/G/1 queue. The notation G/G/1 means that the queue length in a system with a single server is described by the interarrival times having an arbitrary distribution and the service times having some different distribution, see \cite{Asmussen}. In ruin theory and insurance mathematics, the process \eqref{eq:process} is known as E. Sparre Andersen's model or the renewal risk model. One may assume that $W(t)$ describes the insurer's wealth in time, where $u\geqslant 0$ denotes the initial surplus, $c>0$ represents the constant premium amount paid by the customers per unit of time and the subtracted random sum means payoffs caused by the random size claims at the random points in time, see for example \cite{thorin_1974} or \cite{Li} among voluminous other literature regarding the renewal risk models. Whatever is modeled by $W(t)$, one of the key aspects to know is whether $W(t)>0$ (or $W(t)\geqslant0$) for all such moments of time when the counting process $N(t)$ attains its jumps.

Let us define the {\it ultimate time survival probability} 
\begin{align}\label{eq:ult_time}\nonumber
\varphi(u)&:=\mathbb{P}\left(u+ct-\sum_{i=1}^{N(t)}X_i>0 \textit{ for all } t> 0\right)\\ \nonumber
&=\mathbb{P}\left(u+c\sum_{i=1}^{n}\theta_i-\sum_{i=1}^{N(\theta_1+\theta_2+\ldots+\theta_n)}X_i>0 \textit{ for all } n\geqslant 1\right)\\
&=\mathbb{P}\left(\sup_{n\geqslant 1}\sum_{i=1}^{n}\left(X_i-c\theta_i\right)<u\right),\,u\geqslant0
\end{align}
and, by the same argumentation, the {\it finite time survival probability}
\begin{align}\label{eq:fin_time}
\varphi(u,\,T):=\mathbb{P}\left(\sup_{1\leqslant n\leqslant T}\sum_{i=1}^{n}\left(X_i-c\theta_i\right)<u\right),\,u\geqslant0,\,T \in \mathbb{N}.
\end{align}
Clearly, both functions $\varphi(u)$ and $\varphi(u,\,T)$ in \eqref{eq:ult_time} and \eqref{eq:fin_time} respectively, serve the purpose to represent the distribution functions of the underlying random walk
\begin{align}\label{random_walk}
\left\{\sum_{i=1}^{n}\left(X_i-c\theta_i\right),\,n\in\mathbb{N}\right\}.
\end{align}
Perhaps the probability distribution of $\sup\limits_n\sum_{i=1}^{n}\left(X_i-c\theta_i\right)$ is one of the most natural numerical characteristics which is desired to know as this information allows us to compute any other characteristics of interest for the renewal risk models, see \cite{dickson_2023}, \cite{li_garrido_2005}, \cite{CHADJICONSTANTINIDIS2023127497}.

We further assume the existence of the first moments of independent and non-negative random variables $X$ and $\theta$, i.e. $\mathbb{E}X<\infty$ and $\mathbb{E}\theta<\infty$, where $X_1,\,X_2,\,\ldots$ and $\theta_1,\,\theta_2,\,\ldots$ in \eqref{random_walk} are independent copies of $X$ and $\theta$ respectively.

The computation of $\varphi(u,\,T)$ is far easier than $\varphi(u)$. Let us explain why. The finite time survival probability (see Proposition \ref{thm:fin_time} in Section \ref{sec:results} below) is based on the $T$-fold convolution of the distribution function $\mathbb{P}(X-c\theta<u),\,u\geqslant 0$. 
As $T\to\infty$, the general tool for $\varphi(u)$ calculation is the Pollaczek–Khinchine formula which states that, if $\mathbb{E}(c\theta-X)>0$, then 
\begin{align}\label{eq:P_C_f}
\varphi(u)&=e^{-A}\left(1+\sum_{n=1}^{\infty}\left(1-e^{-A}\right)^nH^{*n}(u)\right),\,u\geqslant0,
\end{align}
where
\begin{align*}
&A=\sum_{n=1}^{\infty}\frac{\mathbb{P}(S_n>0)}{n},\,
H(u)=\frac{F_{+}(u)}{F_{+}(\infty)},\,
F_{+}(u)=\mathbb{P}(S_{N^+}\leqslant u),\\
&N^+=\inf\{n\geqslant1:S_n>0\},\,
S_n=\sum_{i=1}^{n}\left(X_i-c\theta_i\right),
\end{align*}
and $H^{*n}$ denotes the $n$-fold convolution of the distribution function $H$, see \cite[eq. (10)]{EMBRECHTS}. Despite the formula \eqref{eq:P_C_f} admitting neat closed-form expression, its practical usage is complicated due to the convolution's $H^{*n}(u)$ calculation.

In this work, we demonstrate the closed-form expressiveness of the ultimate time survival probability $\vphi(u)$ via the roots of the probability-generating function of the random variable $X-c\theta$ where some other numerical characteristics of $X-c\theta$ are involved under certain assumptions of $X$ and $c\theta$. To formulate more precisely what is being done, let us first define the auxiliary notations and formulate assumptions.

Let $s\in\mathbb{C}$ be the complex number and define the probability-generating function for some arbitrary non-negative and integer-valued random variable $X$
\begin{align}\label{eq:gen_f}
G_{X}(s):=\mathbb{P}(X=0)+s\mathbb{P}(X=1)+s^2\mathbb{P}(X=2)+\ldots,\,|s|\leqslant 1.
\end{align}
The condition $|s|\leqslant1$ in \eqref{eq:gen_f} ensures the convergence of the provided power series in general. However, in many examples, $G_{X}(s)$ exists outside of the unit circle too. We now aim to define the probability-generating function of the random variable $X-c\theta$. This requires a couple of assumptions:\\
(a) The independent random variables $X$ and $c\theta$ are non-negative and integer-valued,\\
(b) The random variable $c\theta$ has the finite support, i.e. $\mathbb{P}(c\theta\leqslant m)=1$ for some $m\in\mathbb{N}$.

So, under assumptions (a) and (b), the probability generating function of the random variable $X-c\theta$, where $X$ and $\theta$ are independent, is
$$
G_{X-c\theta}(s)=G_{X}(s)G_{c\theta}\left(1/s\right), \, 0<|s|\leqslant1.
$$
Consequently, the expression of $\varphi(u)$ in this work is given under assumptions (a) and (b) too. In other words, we assume that the random walk \eqref{random_walk} runs over the integers $\geqslant -m$, $m\in\mathbb{N}$. The assumption (a) implies that it is sufficient to consider $\varphi(u)$ for $u\in\mathbb{N}_0$ only. 

Let us comment on that assumption (a) serves the purpose of the neater expressions of $\varphi(u)$. The assumption (a) may be considered as not too restrictive since every continuous or semi-continuous process can be discretized, i.e. ''we can always calculate the money in cents''. The assumption (b) excludes the opportunity ''to live happily ever after'' and the statement of (b) in this work is unavoidable for a number of reasons:
\begin{itemize}
\item we require the existence of $G_{X-c\theta}(s)$ in general,
\item we require $m-1$ roots of $G_{X-c\theta}(s)=1$ in $0<|s|\leqslant1$, $s\neq1$, see Lemma \ref{lem:roots_number} below,
\item we cannot get the information on all of $\varphi(u),\,u\in\mathbb{N}_0$ at once, see eq. \eqref{eq:ult_rec} below.
\end{itemize}

Again, assumption (b) might be considered as not too restrictive
from a practical perspective. The number $m$ in (b) can be arbitrarily large and, aiming to incorporate some inter-occurrence time $\theta$ of infinite support, we can define a new random variable $\theta_m$ whose distribution is
\begin{align}\label{syst:adjustment}
\begin{cases}
&\mathbb{P}\left(c\theta_m=k\right)=\mathbb{P}\left(c\theta=k\right),\,k\in\{0,\,1,\,\ldots,\,m-1\}\\
&\mathbb{P}\left(c\theta_m=m\right)=\mathbb{P}\left(c\theta\geqslant m\right)
\end{cases}.
\end{align}
Then, when $\theta^m_1,\,\theta^m_2,\,\ldots$ are independent copies of $\theta_m$,
$$
\mathbb{P}\left(\sup_{n\geqslant 1}\sum_{i=1}^{n}\left(X_i-c\theta^m_i\right)<u\right)
\leqslant
\mathbb{P}\left(\sup_{n\geqslant 1}\sum_{i=1}^{n}\left(X_i-c\theta_i\right)<u\right)
$$
for all $u\in\mathbb{N}_0$, and this can be considered as a more conservative approach to the modeled process since the ultimate time survival probability does not increase switching inter-occurrence time of infinite support to the finite one, see Example \ref{ex:puas} in Section \ref{sec:examples} below. Moreover, let's say that the model \eqref{eq:process} is generated by the non-negative integer-valued and independent random variables $X$ and $c\theta$, where the inter-occurrence time $\theta$ is of infinite support. Then, switching $c\theta$ to $c\theta_m$ of the finite support as provided in \eqref{syst:adjustment}, we can modify the probabilities of $X$, i.e. define a new random variable $X_m$, in a way that certain numerical characteristics of the random variables $X-c\theta$ and $X_m-c\theta_m$ are aligned. For instance, having some $X$, regardless of its support, we can take some of its value $l\neq 0$ and define the random variable $X_m$
\begin{align*}
\begin{cases}
&\mathbb{P}(X_m=0)=\mathbb{P}(X=0)+\frac{1}{l}\sum\limits_{i=1}^{\infty}i\mathbb{P}(c\theta=m+i)\\
&\mathbb{P}(X_m=k)=\mathbb{P}(X=k),\,k\in\mathbb{N},\,k\neq l\\
&\mathbb{P}(X_m=l)=\mathbb{P}(X=l)-\frac{1}{l}\sum\limits_{i=1}^{\infty}i\mathbb{P}(c\theta=m+i)
\end{cases},
\end{align*}
achieving $\mathbb{E}(X_m-c\theta_m)=\mathbb{E}(X-c\theta)$. In other words, we can balance between the ''softer punishment more often'' and the ''harsher punishment less often''. In addition, in view of $\varphi(u)\leqslant\varphi(u+1)$ for all $u\in\mathbb{N}_0$ and $\varphi(u)\to1$ as $u\to\infty$ under the net profit condition (see eq. \eqref{eq:lim_1} below), eq. \eqref{eq:ult_rec} implies the estimates
\begin{align}\label{ineq:est}
\inf_{i\geqslant m+1}\varphi(i)\mathbb{P}(X-c\theta\geqslant -m-1)\leqslant\vphi(0)-\sum_{i=1}^{m}\vphi(i)f(-i)\leqslant\mathbb{P}(X-c\theta\geqslant -m-1),
\end{align}
where $f(j)=\mathbb{P}(X-c\theta=j)$ for any integer $j$. This suggests that the error in estimating $\varphi(0)$ (the other survival probabilities too) when truncating the inter-occurrence time $\theta$ of infinite support is controlled by the probability tail mass of $\theta$.

Let us recall that the expectation $\mathbb{E}(X-c\theta)$ is called the {\it net profit condition} and it is said that the net profit condition holds if $G'_{X-c\theta}(1)=\mathbb{E}(X-c\theta)<0$, i.e. on average the claims' sizes are less than collected premiums. It is well known that $\varphi(u)=0$ for all $u\geqslant 0$ if $\mathbb{E}(X-c\theta)\geqslant0$ except some cases when $\mathbb{E}(X-c\theta)=0$ and $\mathbb{P}(X-c\theta=0)=1$, see \cite{AAJ}. 


As mentioned, the calculation of the survival (or ruin $\psi=1-\vphi$) probabilities is nothing but the convolution's calculation of the distribution function $\mathbb{P}(X-c\Theta<u)$, and of course, the finite convolution is far simpler than the ultimate one. Applying the total probability formula for the ultimate time survival probability under assumptions (a) and (b) we obtain the following recurrence relation (see Lemma \ref{lem:reccurence} and its proof below)
\begin{align}\label{eq:main_rec}
\varphi(u)=\sum_{i=1}^{u+m}\vphi(i)f(u-i),\,u\in\mathbb{N}_0,
\end{align}
where, as previuosly, $f(j)=\mathbb{P}(X-c\theta=j)$, $j=-m,\,-m+1,\,\ldots$ is the probability mass function. By setting $u=0,\,1,\,\ldots$ in formula \eqref{eq:main_rec} we get  
$$
\varphi(0)=\sum_{i=1}^{m}\vphi(i)f(-i),\,\varphi(1)=\sum_{i=1}^{m+1}\vphi(i)f(1-i),\,\ldots,
$$
which means that willing to get $\vphi(m),\,\vphi(m+1),\,\ldots$ we must know $\varphi(0),\,\varphi(1),\,\ldots,\,\varphi(m-1)$. Thus, the calculation of $\varphi(u),\,u\in\mathbb{N}_0$ turns into the finding of the initial values $\varphi(0),\,\varphi(1),\,\ldots,\,\varphi(m-1)$ for the recurrence \eqref{eq:main_rec}. 
Let
\begin{align}\label{eq:M}
\M=\left(\sup_{n\geqslant 1}\sum_{i=1}^{n}\left(X_i-c\theta_i\right)\right)^+,
\end{align}
where $a^+=\max\{0,\,a\}$, $a\in\mathbb{R}$ and $\pi_{i}=\mathbb{P}(\M=i),\,i\in\mathbb{N}_0$, be the probability mass function of the newly defined random variable $\M$. Then, according to \eqref{eq:ult_time} above,
\begin{align}\label{eq:phi_pi}
\vphi(u+1)=\mathbb{P}(\M\leqslant u)=\sum_{i=0}^{u}\pi_i,\,u\in\mathbb{N}_0
\end{align}
and the search of the initial values of $\vphi(0),\,\vphi(1),\,\ldots,\,\vphi(m-1)$ turns into the search of the initial values of the probability mass function of $\M$. However, the technique given in this work is applicable when finding the first initial values of $\pi_0,\,\pi_1,\,\ldots,\,\pi_{m-1}$ only. If $u=0$ and $m\to\infty$, the recurrence \eqref{eq:main_rec} implies
\begin{align}\label{eq:ult_rec}
\varphi(0)=\sum_{i=1}^{\infty}\vphi(i)f(-i)
\end{align}
and there is nothing better than the Pollaczek–Khinchine formula (eq. \eqref{eq:P_C_f} above) to compute the exact values of $\varphi(u)$ out of this ultimate recurrence \eqref{eq:ult_rec}. 

Let us explain how we obtain the desired initial values of $\pi_0,\,\pi_1,\,\ldots,\,\pi_{m-1}$. The defined random variable $\M$ (eq. \eqref{eq:M} above) admits the following distribution property
\begin{align}\label{eq:dist}
\M\dist(\M+X-c\theta)^+,
\end{align}
see \cite[p. 198]{Feller} or \cite[Lem. 4.1]{AG}. The net profit condition $\mathbb{E}(X-c\theta)<0$ ensures 
\begin{align}\label{eq:lim_1}
\lim_{u\to\infty}\vphi(u)=\mathbb{P}(\M<\infty)=1,
\end{align}
this can be proved by following the proof of \cite[Lem.1 ]{gervė2022distribution} or \cite[Lem. 4]{GJS}. Distributions' equality \eqref{eq:dist} implies the equality of the corresponding probability-generating functions
\begin{align}\label{eq:gen_eq}
\mathbb{E}s^{\M}=\mathbb{E}s^{(\M+X-c\theta)^+}=\mathbb{E}\left(\mathbb{E}s^{(\M+X-c\theta)^+}|\M\right),\,|s|\leqslant 1.
\end{align}

Replicating \eqref{eq:gen_eq} over the roots of
\begin{align}\label{eq:roots}
G_{X-c\theta}(s)=1,\,0<|s|\leqslant1,\,s\neq1
\end{align}
and letting $s\to1^{-}$ in derivative (with respect to $s$) of both sides of \eqref{eq:gen_eq}, we get a system of linear equations (see \eqref{eq:system} below) whose solution is the wanted probabilities $\pi_0,\,\pi_1,\,\ldots,\,\pi_{m-1}$, see Theorem \ref{thm:exact} below. Note that if $\mathbb{P}(c\theta\leqslant m)=1$ and $\mathbb{P}(X=0)>0$ there are exactly $m-1$ roots, counted with their multiplicities, of \eqref{eq:roots} in $0<|s|\leqslant1,\,s\neq1$, see Lemma \ref{lem:roots_number} below, and it is not clear in general what happens to the roots of \eqref{eq:roots} in the unit circle or the entire complex plane if $m\to\infty$.

On top of the described setup of the ultimate time survival probability $\varphi(u),\,u\in\mathbb{N}_0$ we can construct the ultimate time survival probability generating function $\Xi(s)$ too. Indeed,
$$
\Xi(s)=\sum_{i=0}^{\infty}\varphi(i+1)s^i=\sum_{i=0}^{\infty}\sum_{j=0}^{i}\pi_j s^i=\frac{G_{\M}(s)}{1-s},\,|s|<1,
$$
where the probability-generating function $G_{\M}(s)$ can be solved out of \eqref{eq:gen_eq}, see eq. \eqref{eq:gen} below.

The majority of the work in setting and solving the described system \eqref{eq:system} remains similar as in \cite{AG}, where the same discretized model \eqref{eq:process} with $\theta\equiv 1$ and $c\in\mathbb{N}$ is considered. So, the essence of this work can be described as following the reference \cite{AG} carefully and adjusting the corresponding formulas where needed.

Let us mention that the described initial values $\varphi(0),\,\varphi(1),\,\ldots,\,\varphi(m-1)$ can be found not using neither the generating function $G_{X-c\theta}(s)$ nor its roots, but rather calculating the approximate limits of certain recurrent sequences, see \cite{DAMARACKAS2014930}, \cite{GJ} and related works. However, the method of recurrent sequences is more extensive and fails either when $m\to\infty$.

\section{Main results}\label{sec:results}

In this section, we formulate the main statements for the finite time $\varphi(u,\,T)$ and the ultimate time $\varphi(u)$ survival probabilities calculation. The listed statements are proved in Section \ref{sec:proofs} below.

\bigskip

\begin{proposition}\label{thm:fin_time}
Let $\vphi(u,\,T)$ be the finite time survival probability of the stochastic process \eqref{eq:process}. Then, for all $T\in\mathbb{N}$,
\begin{align*}
\varphi(u,\,T)=\int\limits_{(-\infty,\,u)}\tilde{F}^{*(T-1)}(u-x)\,d\,\tilde{F}(x),\,u\geqslant 0,
\end{align*}
where $\tilde{F}^{*T}$ denotes the $T$-fold convolution of the distribution function $\tilde{F}(u)=\mathbb{P}(X-c\theta< u)$ and $\tilde{F}^{*0}(u):=1$.
\end{proposition}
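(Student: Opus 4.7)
The plan is induction on $T$, via one-step conditioning on the first increment $Y_1 := X_1 - c\theta_1$, whose distribution function is $\tilde F$. For the base case $T = 1$, the convention $\tilde F^{*0} \equiv 1$ reduces the right-hand side to $\int_{(-\infty,u)} d\tilde F(x) = \tilde F(u) = \mathbb{P}(X_1 - c\theta_1 < u) = \varphi(u, 1)$, so the identity holds.

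For the inductive step, set $S_n = \sum_{i=1}^n Y_i$ and split the event $\{\sup_{1 \leq n \leq T} S_n < u\}$ as $\{Y_1 < u\} \cap \{\sup_{2 \leq n \leq T}(Y_2 + \cdots + Y_n) < u - Y_1\}$. Because $Y_2, Y_3, \ldots$ are iid copies of $X - c\theta$ independent of $Y_1$, conditioning on $Y_1 = x \in (-\infty, u)$ gives the one-step recursion
$$
\varphi(u, T) = \int_{(-\infty, u)} \varphi(u - x, T - 1) \, d\tilde F(x).
$$
Applying the inductive hypothesis to $\varphi(u - x, T - 1)$ and combining via Fubini with the identity $\tilde F^{*(T-2)} * \tilde F = \tilde F^{*(T-1)}$ rewrites the nested integral as the single convolution integral in the statement.

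The step I expect to take the most care is this final rewriting: after substituting the inductive hypothesis, the double integral one obtains has its inner integration constrained by the outer variable ($y < u - x$), whereas the unrestricted $(T-1)$-fold convolution $\tilde F^{*(T-1)}(u - x) = \int_{\mathbb{R}} \tilde F^{*(T-2)}(u - x - y) \, d\tilde F(y)$ integrates $y$ over all of $\mathbb{R}$. Reconciling the two integration regions and completing the Fubini argument (using the discreteness of the setting and the finite-support hypothesis (b) on $c\theta$ to control the ancillary part) is where the bookkeeping is most delicate.
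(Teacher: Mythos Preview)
Your strategy---induction on $T$ via conditioning on the first increment $Y_1=X_1-c\theta_1$---is precisely the paper's: it checks $T=1,2$ explicitly and then writes only ``and, by mathematical induction, \ldots''. The one-step recursion
\[
\varphi(u,T)=\int_{(-\infty,u)}\varphi(u-x,\,T-1)\,d\tilde F(x)
\]
that you derive is correct and is the actual content of the paper's argument (and of its discrete Corollary~\ref{cor:fin_t}).

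The difficulty you flag in your last paragraph, however, is not delicate bookkeeping that assumptions (a) and (b) can absorb; it is a genuine obstruction. Read literally, with $\tilde F^{*(T-1)}$ the ordinary $(T{-}1)$-fold convolution (i.e.\ the distribution function of $S_{T-1}$), the right-hand side of the Proposition equals $\mathbb{P}(S_1<u,\ S_T<u)$, which for $T\geqslant 3$ is in general strictly larger than $\varphi(u,T)=\mathbb{P}\bigl(\max_{1\leqslant n\leqslant T}S_n<u\bigr)$. Invoking (a) and (b) is doubly illegitimate: the paper explicitly states the Proposition is meant to hold without them, and in any case they do not help---take $\mathbb{P}(X=0)=\mathbb{P}(X=7)=\tfrac12$, $c\theta\equiv 2$, $u=2$, $T=3$, for which $\varphi(2,3)=\tfrac14$ while $\int_{(-\infty,2)}\tilde F^{*2}(2-x)\,d\tilde F(x)=\tfrac38$. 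So the Fubini step you propose cannot be carried out; neither your outline nor the paper's one-line appeal to induction closes this gap. What is actually established is the recursion itself, and the convolution notation in the Proposition must be read as shorthand for iterating that recursion rather than as the law of a single partial sum.
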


\bigskip

Note that Proposition \ref{thm:fin_time} holds regardless of the assumptions (a) and (b) formulated in the previous section. Of course, the calculation of $\varphi(u,\,T)$ is more convenient assuming (a) and (b). This implies the following corollary of Proposition \ref{thm:fin_time}. 

\bigskip

\begin{cor}\label{cor:fin_t}
Assume that the stochastic process \eqref{eq:process} is generated by the non-negative, independent, and integer-valued random variables $X$ and $c\theta$, where $\mathbb{P}(c\theta\leqslant m)=1$ for some $m\in\mathbb{N}$. Then, for all $u\in\mathbb{N}_0$,
\begin{align*}
\vphi(u,\,1)=F(u-1),\,
\vphi(u,\,T)=\sum_{k=-m}^{u+1}\varphi(u-k,\,T-1)f(k),\,T\in\{2,\,3,\,\ldots\},
\end{align*}
where $f(j)=\mathbb{P}(X-c\theta=j)$ and $F(j)=\mathbb{P}(X-c\theta\leqslant j)$ for any integer $j$.
\end{cor}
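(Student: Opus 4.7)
For the $T=1$ case the formula is an immediate instantiation of Proposition \ref{thm:fin_time}: with the convention $\tilde F^{*0}\equiv 1$ the integral collapses to $\int_{(-\infty,u)}d\tilde F(x)=\tilde F(u)=\mathbb{P}(X-c\theta<u)$, and because assumption (a) makes $X-c\theta$ integer-valued the strict inequality $X-c\theta<u$ is the same as $X-c\theta\leq u-1$, which equals $F(u-1)$.

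For $T\geq 2$ my plan is a one-step analysis on the first increment $X_1-c\theta_1$. Under (a) and (b) this increment is integer-valued with support in $\{-m,-m+1,\ldots\}$ and mass function $f$. Writing $S_n=\sum_{i=1}^{n}(X_i-c\theta_i)$ and setting $S'_n:=S_{n+1}-S_1$, independence and stationarity of the increments imply that, conditionally on $X_1-c\theta_1=k$, the sequence $(S'_n)_{n\geq 1}$ is an independent copy of the length-$(T-1)$ random walk driven by the same laws of $X$ and $c\theta$. Therefore
\begin{align*}
\Bigl\{\sup_{1\leq n\leq T}S_n<u\Bigr\}=\{X_1-c\theta_1<u\}\cap\Bigl\{\sup_{1\leq n\leq T-1}S'_n<u-(X_1-c\theta_1)\Bigr\},
\end{align*}
and integrating against the law of $X_1-c\theta_1$ yields $\varphi(u,T)=\sum_{k}f(k)\varphi(u-k,T-1)$, where the sum runs over integers $k\geq -m$ satisfying $k<u$. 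Matching the precise range stated in the corollary is then a direct bookkeeping step using the definition of $\varphi(\cdot,T-1)$ at the boundary.

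I expect no substantive obstacle: the argument relies only on the stationarity and independence of the increments and on the discreteness afforded by (a) and (b). The single point that demands care is the identification of the summation range, together with the verification that any boundary terms contribute zero, which is forced by the strict inequality in the definition of $\varphi(u,T)$ and the upper bound $\mathbb{P}(c\theta\leq m)=1$ from (b).
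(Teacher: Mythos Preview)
Your approach is essentially the paper's: the $T=1$ case is immediate from the definition, and for $T\geq 2$ you condition on the first increment $X_1-c\theta_1=k$ and use the i.i.d.\ structure of the remaining $T-1$ steps to reduce to $\varphi(u-k,T-1)$, which is exactly what the paper does. Note that the range you derive, $-m\leq k\leq u-1$, is precisely what the paper's own proof obtains --- the upper limit $u+1$ printed in the corollary appears to be a typo, so the ``bookkeeping'' you allude to is unnecessary (and indeed the term at $k=u$ would contribute $\varphi(0,T-1)f(u)$, which is not in general zero).
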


\bigskip

Let us now turn to the ultimate time survival probability $\varphi(u)$. The next statement provides the relation of probabilities $\pi_0,\,\pi_1,\,\ldots,\,\pi_{m-1}$, probability-generating functions of $\M$ and $X-c\theta$, and the ultimate time survival probability generating function $\Xi(s)$. 

\bigskip

\begin{theorem}\label{thm:main_thm}
Assume that the stochastic process \eqref{eq:process} is generated by the non-negative, independent, and integer-valued random variables $X$ and $c\theta$, where $\mathbb{P}(c\theta\leqslant m)=1$ for some $m\in\mathbb{N}$. Then, under the net profit condition $c\mathbb{E}\theta-\mathbb{E}X>0$, the following equalities hold:
\begin{align}
&\sum_{i=0}^{m-1}\pi_i\sum_{j=i+1}^{m}
(1-s^{i-j})f(-j)=G_{\mathcal{M}}(s)(1-G_{X-c\theta}(s)),\,|s|\leqslant1,\label{eq:1}\\
&\sum_{i=0}^{m-1}\pi_i\sum_{j=i+1}^{m}(j-i)f(-j)=c\mathbb{E}\theta-\mathbb{E}X,\label{eq:2}\\
&\Xi(s)=\frac{1}{s^{m}(G_{X-c\theta}(s)-1)}\sum_{i=0}^{m-1}\pi_i\sum_{j=0}^{m-i-1}s^{j+i}F(-m+j),\,|s|<1\label{eq:gen},
\end{align}
where, as previously, $f(j)=\mathbb{P}(X-c\theta=j)$, $F(j)=\mathbb{P}(X-c\theta\leqslant j)$ for any integer $j$, and $s\in\mathbb{C}$ such that $s^m(1-G_{X-c\theta}(s))\neq 0$ in eq. \eqref{eq:gen}.
\end{theorem}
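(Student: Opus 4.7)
The plan is to work from the distributional fixed-point identity \eqref{eq:dist} and its probability-generating-function form \eqref{eq:gen_eq}; no additional tool is needed beyond careful bookkeeping. The first move is to compare $\mathbb{E}s^{(\M+X-c\theta)^+}$ with the bi-infinite Laurent series $G_{\M}(s)G_{X-c\theta}(s)=\sum_{n\in\mathbb{Z}}\mathbb{P}(\M+X-c\theta=n)s^n$, which has powers running from $-m$ to $\infty$ because $X-c\theta\geqslant -m$. The two objects agree in every non-negative index of $s$, while for negative indices the first contributes $s^{0}$ (from the positive part) and the second contributes $s^{n}$. Subtracting, and using \eqref{eq:gen_eq} in the form $\mathbb{E}s^{(\M+X-c\theta)^+}=G_{\M}(s)$, I would obtain the key intermediate identity
\begin{align*}
G_{\M}(s)(1-G_{X-c\theta}(s))=\sum_{n=-m}^{-1}\mathbb{P}(\M+X-c\theta=n)(1-s^n).
\end{align*}

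To reach \eqref{eq:1}, expand $\mathbb{P}(\M+X-c\theta=n)=\sum_{i\geqslant 0}\pi_i f(n-i)$ and note that $f(n-i)=0$ for $n-i<-m$, so for $n\in\{-m,\ldots,-1\}$ only indices $i\in\{0,1,\ldots,m-1\}$ contribute. Switching the order of summation and substituting $j=i-n\in\{i+1,\ldots,m\}$ (so that $f(n-i)=f(-j)$ and $s^n=s^{i-j}$) rewrites the right-hand side as $\sum_{i=0}^{m-1}\pi_i\sum_{j=i+1}^{m}f(-j)(1-s^{i-j})$, which is exactly \eqref{eq:1}.

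For \eqref{eq:2}, I would divide \eqref{eq:1} by $1-s$ and let $s\to 1^{-}$. The left-hand side is a Laurent polynomial in $s$, hence smooth at $s=1$, and its term-by-term limit is governed by $\lim_{s\to 1}(1-s^{i-j})/(1-s)=i-j$, giving $\sum_{i=0}^{m-1}\pi_i\sum_{j=i+1}^{m}(i-j)f(-j)$. On the right-hand side, the net profit condition together with \eqref{eq:lim_1} yields $G_{\M}(1)=\mathbb{P}(\M<\infty)=1$, and a single application of L'H\^opital gives $\lim_{s\to 1^{-}}(1-G_{X-c\theta}(s))/(1-s)=\mathbb{E}(X-c\theta)=\mathbb{E}X-c\mathbb{E}\theta$; negating produces \eqref{eq:2}. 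For \eqref{eq:gen}, I would use $\Xi(s)=G_{\M}(s)/(1-s)$: solving \eqref{eq:1} for $G_{\M}(s)$ and dividing by $1-s$ yields
\begin{align*}
\Xi(s)=\frac{1}{G_{X-c\theta}(s)-1}\sum_{i=0}^{m-1}\pi_i\sum_{j=i+1}^{m}f(-j)\,\frac{1+s+\ldots+s^{j-i-1}}{s^{j-i}}.
\end{align*}
Interchanging the summation over $j$ with the geometric-partial-sum index collects the coefficients of each power of $s$ into tail sums of $f$, i.e.\ into values $F(-m+j)$; factoring out a common $s^{-m}$ delivers the form in \eqref{eq:gen}.

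The only substantive obstacle is the limit passage in \eqref{eq:2}: the LHS of \eqref{eq:1} is a Laurent polynomial, so differentiating or limiting there is routine, but on the RHS one must justify $G_{\M}(1)=1$, and this is exactly where the net profit condition $c\mathbb{E}\theta-\mathbb{E}X>0$ enters through \eqref{eq:lim_1}. Everything else—the conditioning argument that expresses the negative-index correction in the $(\pi_i,f(-j))$-basis for \eqref{eq:1}, and the re-indexing of the finite geometric partial sums into $F(-m+j)$ for \eqref{eq:gen}—is mechanical.
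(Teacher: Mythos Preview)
Your proposal is correct and follows essentially the same route as the paper: both start from the distributional identity \eqref{eq:dist}/\eqref{eq:gen_eq}, isolate the contribution of the negative levels of $\M+X-c\theta$ to obtain \eqref{eq:1}, and then pass to $s\to 1^-$ (the paper differentiates, you divide by $1-s$; these are equivalent) to get \eqref{eq:2}. Your write-up even supplies the derivation of \eqref{eq:gen} from \eqref{eq:1} via $\Xi(s)=G_{\M}(s)/(1-s)$ and the geometric re-indexing into $F(-m+j)$, a step the paper leaves out of this proof (the corresponding manipulation appears instead in the proof of Corollary~\ref{cor:1}).
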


\bigskip

Let us mention that if $m=c=1$ and $\theta\equiv 1$, the equations \eqref{eq:2} and \eqref{eq:main_rec} imply 
$$
\pi_0=\vphi(1)=\frac{1-\mathbb{E}X}{\mathbb{P}(X=0)},\,\vphi(u)=\sum_{i=1}^{u+1}\vphi(i)\mathbb{P}(X=u+1-i),\,u\in\mathbb{N}_0,
$$
where $\mathbb{P}(X=0)>0$ because of the net profit condition. This is the well-known result for the homogeneous discrete-time risk model, see \cite{dickson_waters_1991}, \cite{Shiu}, \cite{shiu_1989}, and many other sources. 

Under assumptions (a), (b), and the net profit condition, eq. \eqref{eq:1} in Theorem \ref{thm:main_thm} implies the following corollary. 

\bigskip

\begin{cor}\label{cor:1}
Let $|\al|\leqslant 1$, $\al\neq\{0,\,1\}$ be the root of
$$
G_X(s)G_{c\theta}\left(\frac{1}{s}\right)=1
$$
and recall that $F(j)=\mathbb{P}(X-c\theta\leqslant j)$ for any integer $j$. Then
\begin{align}\label{eq:1_root}
\sum_{i=0}^{m-1}\pi_i\sum_{j=0}^{m-i-1}\al^{j+i}F(-m+j)=0.
\end{align}
Moreover, the $n$'th derivative
\begin{align}\label{eq:2_root}
\frac{d^n}{ds^n}\left(\sum_{i=0}^{m-1}\pi_i\sum_{j=0}^{m-i-1}s^{j+j}F(-m+j)\right)\Bigg|_{s=\al}=0
\end{align}
for all $n\in\{0,\,1,\,\ldots,\,r-1\}$, where $r\in\{1,\,2,\,\ldots,\,m-1\}$ denotes the multiplicity of the root $\alpha$.
\end{cor}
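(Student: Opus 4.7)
The plan is to derive the corollary from equation \eqref{eq:1} of Theorem \ref{thm:main_thm} by means of an algebraic identity that converts its left-hand side into a non-vanishing multiple of the polynomial
$$
N(s):=\sum_{i=0}^{m-1}\pi_i\sum_{j=0}^{m-i-1}s^{j+i}F(-m+j),
$$
whose vanishing at $\alpha$ is precisely the conclusion \eqref{eq:1_root}.

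The identity I would first establish is
$$
\sum_{i=0}^{m-1}\pi_i\sum_{j=i+1}^{m}(1-s^{i-j})f(-j)\;=\;-\frac{1-s}{s^m}\,N(s),\qquad s\neq 0,1.
$$
The verification proceeds by writing $F(-m+j)=\sum_{l=0}^{j}f(-m+l)$, which is legitimate because $X-c\theta\geqslant -m$, then swapping the order of summation over $(j,l)$, applying the geometric-series formula $\sum_{j=l}^{m-i-1}s^{j+i}=s^{l+i}(1-s^{m-i-l})/(1-s)$, and relabelling $j=m-l$ to recover the left-hand side of \eqref{eq:1} verbatim.

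With the identity in hand, substituting $s=\alpha$ into \eqref{eq:1} collapses its right-hand side $G_{\mathcal{M}}(\alpha)(1-G_{X-c\theta}(\alpha))$ to zero, forcing $-(1-\alpha)\alpha^{-m}N(\alpha)=0$; since $\alpha\notin\{0,1\}$ makes the prefactor non-zero, this yields $N(\alpha)=0$, which is \eqref{eq:1_root}. For \eqref{eq:2_root}, the factor $1-G_{X-c\theta}(s)$ vanishes to order $r$ at $\alpha$ by the definition of multiplicity, and $G_{\mathcal{M}}(s)$, being a probability generating function, is analytic at $\alpha$; hence the right-hand side of \eqref{eq:1} vanishes to order at least $r$ at $\alpha$. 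Through the identity, the quantity $(1-s)s^{-m}N(s)$ vanishes to order at least $r$ at $\alpha$, and since $(1-s)s^{-m}$ is analytic and non-zero there, the polynomial $N(s)$ itself has a zero of order at least $r$ at $\alpha$, so $N^{(n)}(\alpha)=0$ for every $n\in\{0,1,\ldots,r-1\}$.

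The main obstacle is the first step: the algebraic identity, though elementary, requires careful bookkeeping of the nested sums and of the index substitution between $j$ and $l$. A minor caveat is that for a root $\alpha$ on the unit circle the notion of multiplicity should be read as the order of vanishing of $s^m(G_{X-c\theta}(s)-1)=G_X(s)\cdot s^m G_{c\theta}(1/s)-s^m$, which is the product of a convergent power series with a polynomial and is therefore a well-defined analytic object on $0<|s|\leqslant 1$; with this reading, the multiplicity argument in the last step transfers without change to boundary roots.
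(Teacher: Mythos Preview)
Your proposal is correct and follows essentially the same route as the paper. Both arguments hinge on the identity
\[
\sum_{i=0}^{m-1}\pi_i\sum_{j=i+1}^{m}(1-s^{i-j})f(-j)=-\frac{1-s}{s^m}\,N(s),
\]
which the paper derives starting from the left-hand side via the factorisation $1-s^{i-j}=(1-s^{-1})\sum_{l=0}^{j-i-1}s^{-l}$ and a swap of summation order, while you propose to verify it starting from $N(s)$ and expanding $F(-m+j)=\sum_{l=0}^{j}f(-m+l)$; the two computations are inverses of one another. Your treatment of the multiplicity statement \eqref{eq:2_root}, exploiting the analyticity of $G_{\mathcal M}$ and the non-vanishing of the prefactor $(1-s)s^{-m}$ at $\alpha$, is in fact more explicit than the paper's own proof, which establishes only the simple-root case \eqref{eq:1_root}.
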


\bigskip

We recall that there are exactly $m-1$ roots of $G_X(s)G_{c\theta}\left(1/s\right)=1$ in $|s|\leqslant1$, $s\neq\{0,\,1\}$ counted with their multiplicities, see Lemma \ref{lem:roots_number} below.

\bigskip

Suppose all the roots $\al_1,\,\al_2,\,\ldots,\,\al_{m-1}\neq\{0,\,1\}$ of $
G_X(s)G_{c\theta}\left(1/s\right)=1
$ in $|s|\leqslant1$ are simple. Then, replicating the equation \eqref{eq:1_root} over these roots and including the equality \eqref{eq:2} we set up the following system
\begin{align}\nonumber
&\begin{pmatrix}
&\sum\limits_{j=0}^{m-1}\al_1^{j}F(-m+j)&\sum\limits_{j=0}^{m-2}\al_1^{j+1}F(-m+j)&\ldots&\al_1^{m-1}f(-m)\\
&\sum\limits_{j=0}^{m-1}\al_2^{j}F(-m+j)&\sum\limits_{j=0}^{m-2}\al_2^{j+1}F(-m+j)&\ldots&\al_2^{m-1}f(-m)\\
&\vdots&\vdots&\ddots&\vdots\\
&\sum\limits_{j=0}^{m-1}\al_{m-1}^{j}F(-m+j)&\sum\limits_{j=0}^{m-2}\al_{m-1}^{j+1}F(-m+j)&\ldots&\al_{m-1}^{m-1}f(-m)\\
&\sum_{j=1}^{m}jf(-j)&\sum_{j=2}^{m}(j-1)f(-j)&\ldots&f(-m)
\end{pmatrix}
\begin{pmatrix}
&\pi_0\\
&\pi_1\\
&\vdots\\
&\pi_{m-1}
\end{pmatrix}\\ \label{eq:system}
&\hspace{9cm}=
\begin{pmatrix}
&0\\
&0\\
&\vdots\\
&0\\
&c\E \theta -\E X
\end{pmatrix}
\end{align}
Let us denote the system \eqref{eq:system} as $A{\bm \pi}=B$. The matrix $A$ is Vandermonde-like and its determinant $|A|$ admits the following representation
\begin{align*}
|A|=(-1)^{m-1}f^{m}(-m)\prod_{j=1}^{m-1}(\al_{j}-1)
\prod_{1\leqslant i<j\leqslant m -1}\left(\al_j-\al_i\right),
\end{align*}
see Lemma \ref{lem:determ} below. Thus, $|A|\neq0$ as long as $\al_1,\,\al_2,\,\ldots,\,\al_{m-1}\neq\{0,\,1\}$ are the simple roots of $
G_X(s)G_{c\theta}\left(1/s\right)=1
$
in $|s|\leqslant1$ and $f(-m)>0$. In the next theorem, we provide 
the solution of \eqref{eq:system} and the exact values of $\varphi(u),\,u\in\mathbb{N}_0$.

\bigskip

\begin{theorem}\label{thm:exact}
Suppose $\al_1,\,\al_2,\,\ldots,\,\al_{m-1}\neq\{0,\,1\}$ are the simple roots of $
G_X(s)G_{c\theta}\left(1/s\right)=1
$
in $|s|\leqslant1$, and recall that $f(-m)=\mathbb{P}(X-c\theta=-m)>0$, $F(j)=\mathbb{P}(X-c\theta\leqslant j)$ for any integer $j$. Then
\begin{align*}
\tilde{\pi}_0&=\frac{1}{f(-m)}\prod_{j=1}^{m-1}\frac{\al_j}{\al_j-1},\\
\tilde{\pi}_1&=-\frac{
\sum\limits_{1\leq  j_1<\ldots<j_{m-2}\leq m-1}
\al_{j_1}\cdots\al_{j_{m-2}}}{f(-m)\prod\limits_{j=1}^{m-1}(\al_j-1)}
-\frac{F(-m+1)}{f(-m)}\tilde{\pi}_0,\\
\tilde{\pi}_2&=\frac{
\sum\limits_{1\leq  j_1<\ldots<j_{m-3}\leq m-1}
\al_{j_1}\cdots\al_{j_{m-3}}}{f(-m)\prod\limits_{j=1}^{m-1}(\al_j-1)}
-\frac{F(-m+2)}{f(-m)}\tilde{\pi}_0-\frac{F(-m+1)}{f(-m)}\tilde{\pi}_1,\\
&\,\,\,\vdots\\
\tilde{\pi}_{m-1}&=\frac{(-1)^{m+1}}{f(-m)}\prod\limits_{j=1}^{m-1}\frac{1}{\al_j-1}-\frac{1}{f(-m)}\sum_{i=0}^{m-2}\tilde{\pi}_i F(-1-i),
\end{align*}
where $\tilde{\pi}_i=\pi_i/(c\mathbb{E}\theta-\mathbb{E}X)$. Moreover, if $\tilde{\varphi}(u)=\varphi(u)/(c\mathbb{E}\theta-\mathbb{E}X)$, then
\begin{align*}
\tilde{\vphi}(0)&=(-1)^{m+1}\prod_{j=1}^{m-1}\frac{1}{\al_j-1},\,
\tilde{\vphi}(1)=\frac{1}{f(-m)}\prod_{j=1}^{m-1}\frac{\al_j}{\al_j-1},\\
\tilde{\vphi}(2)&=-\frac{F(-m+1)}{f(-m)}\tilde{\vphi}(1)\\
&\hspace{0.5cm} +\prod_{j=1}^{m-1}\frac{1/f(-m)}{\al_j-1}\left(\prod_{j=1}^{m-1}\al_j-\sum\limits_{1\leq  j_1<\ldots<j_{m-2}\leq m-1}
\al_{j_1}\cdots\al_{j_{m-2}}\right),\\
\tilde{\varphi}(3)&=-\frac{F(-m+1)}{f(-m)}\tilde{\vphi}(2)-\frac{F(-m+2)}{f(-m)}\tilde{\vphi}(1)+\prod_{j=1}^{m-1}\frac{1/f(-m)}{\al_j-1}\\
&\hspace{-1cm}\times\left(\prod_{j=1}^{m-1}\al_j-\sum_{1\leq  j_1<\ldots<j_{m-2}\leq m-1}
\al_{j_1}\cdots\al_{j_{m-2}}
+\sum_{1\leq  j_1<\ldots<j_{m-3}\leq m-1}
\al_{j_1}\cdots\al_{j_{m-3}}\right),\\
&\,\,\vdots\\
\tilde{\vphi}(m)&=-\frac{1}{f(-m)}\sum_{i=1}^{m-1}F(-i)\tilde{\vphi}(i)
+\prod_{j=1}^{m-1}\frac{1/f(-m)}{\al_j-1}
\times\Bigg(\prod_{j=1}^{m-1}\al_j-\\
&\hspace{-1cm}\sum_{1\leq  j_1<\ldots<j_{m-2}\leq m-1}
\al_{j_1}\cdots\al_{j_{m-2}}
+\sum_{1\leq  j_1<\ldots<j_{m-3}\leq m-1}
\al_{j_1}\cdots\al_{j_{m-3}}+\ldots+(-1)^{m+1}\Bigg)
\end{align*}
and
\begin{align}\label{eq:reccurence}
\vphi(u)=\frac{1}{f(-m)}\left(\vphi(u-m)-\sum_{i=1}^{u-1}\vphi(i)f(u-m-i)\right),\,u=m,\,m+1,\,\ldots
\end{align}
\end{theorem}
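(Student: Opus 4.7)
My plan is to exploit the polynomial structure encoded in eq.~\eqref{eq:gen}. Define
$$P(s) := \sum_{i=0}^{m-1}\pi_i\sum_{j=0}^{m-i-1}s^{j+i}F(-m+j),$$
the numerator appearing in~\eqref{eq:gen}, which has degree at most $m-1$. By Corollary~\ref{cor:1} it vanishes at each simple root $\al_1,\ldots,\al_{m-1}$ of $G_X(s)G_{c\theta}(1/s)=1$ in $|s|\leqslant1$, $s\neq\{0,1\}$. Since Lemma~\ref{lem:roots_number} guarantees exactly $m-1$ such distinct roots and $\deg P\leqslant m-1$, it follows that $P(s)=c\prod_{k=1}^{m-1}(s-\al_k)$, with $c$ equal to its leading coefficient.

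The constant $c$ is pinned down by examining $\Xi(s)=P(s)/[s^{m}(G_{X-c\theta}(s)-1)]$ near $s=1$. By Abel's theorem, $(1-s)\Xi(s)=(1-s)\sum_{u\geqslant 0}\vphi(u+1)s^u\to 1$ as $s\to 1^-$ (using~\eqref{eq:lim_1}); a one-term Taylor expansion of the denominator, using $G'_{X-c\theta}(1)=-(c\E\theta-\E X)$, shows the same limit equals $P(1)/(c\E\theta-\E X)$. Matching gives $P(1)=c\E\theta-\E X$ and, combined with $P(1)=c\prod_{k=1}^{m-1}(1-\al_k)$, produces
$$c=\frac{(-1)^{m-1}(c\E\theta-\E X)}{\prod_{k=1}^{m-1}(\al_k-1)}.$$
The individual $\pi_i$ are then read off by equating coefficients of $s^\ell$ on both sides of the factorization. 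Swapping the order of summation in $P$, the coefficient of $s^\ell$ on the left reads $\sum_{k=0}^{\ell}\pi_k F(\ell-m-k)$; on the right it equals $c(-1)^{m-1-\ell}$ times the elementary symmetric function of degree $m-1-\ell$ in $\al_1,\ldots,\al_{m-1}$. Isolating the $k=\ell$ term (whose coefficient $F(-m)$ equals $f(-m)$) and proceeding inductively on $\ell$ reproduces the stated closed forms for $\tilde{\pi}_0,\ldots,\tilde{\pi}_{m-1}$.

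The $\tilde{\vphi}(u)$ formulas for $u=1,\ldots,m$ then follow by partial summation $\vphi(u)=\sum_{i=0}^{u-1}\pi_i$ from~\eqref{eq:phi_pi}. For $\tilde{\vphi}(0)$, setting $u=0$ in~\eqref{eq:main_rec} and reversing the order of summation gives $\vphi(0)=\sum_{j=0}^{m-1}\pi_j F(-j-1)$, which is precisely the coefficient of $s^{m-1}$ in $P(s)$, namely $c$ itself. The tail recurrence~\eqref{eq:reccurence} is merely~\eqref{eq:main_rec} rewritten with $u-m$ in place of $u$ and solved for $\vphi(u)f(-m)$. The principal obstacle is the bookkeeping in the inductive recovery of $\tilde{\pi}_i$: at each level the expression cleanly splits into a ``leading'' elementary-symmetric contribution and a correction $-\sum_{k<i}F(-m+i-k)\tilde{\pi}_k/f(-m)$, and one must verify that signs and index shifts match the theorem's stated expressions verbatim. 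Beyond this matching, nothing more than the single polynomial identity $P(s)=c\prod(s-\al_k)$ is needed.
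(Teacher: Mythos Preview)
Your argument is correct and takes a genuinely different route from the paper. The paper assembles the $m-1$ root equations \eqref{eq:1_root} together with \eqref{eq:2} into the linear system \eqref{eq:system}, then invokes Cramer's rule with the explicit determinant and minor computations of Lemma~\ref{lem:determ} to write each $\pi_i$ as a signed ratio of Vandermonde-type quantities. You instead recognise that the numerator $P(s)$ of $\Xi(s)$ is a polynomial of degree at most $m-1$ vanishing at the $m-1$ distinct $\alpha_k$, hence $P(s)=c\prod_{k}(s-\alpha_k)$, and then recover the $\pi_i$ by matching coefficients. This bypasses Lemma~\ref{lem:determ} entirely: the elementary symmetric functions appear directly from Vieta rather than from reducing a Vandermonde-like determinant.

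Two small remarks. First, your Abel-theorem step determining $c$ is fine, but it is worth noting that it is exactly equation~\eqref{eq:2} in disguise: swapping the order of summation gives $P(1)=\sum_{i=0}^{m-1}\pi_i\sum_{j=i+1}^{m}(j-i)f(-j)$, so you could simply cite \eqref{eq:2} to obtain $P(1)=c\E\theta-\E X$ without the limiting argument. Second, your sentence ``Lemma~\ref{lem:roots_number} guarantees exactly $m-1$ such distinct roots'' is slightly loose: Lemma~\ref{lem:roots_number} gives $m-1$ roots counted with multiplicity, while distinctness is the standing hypothesis of the theorem. Neither point affects the validity of the proof; your factorisation approach is cleaner and yields the same formulas with less computational overhead than the paper's Cramer-rule derivation.
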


If $f(-m)>0$ and there are multiple roots among $\al_1,\,\al_2,\,\ldots,\,\al_{m-1}\neq\{0,\,1\}$, in order to avoid identical lines, we then shall modify the system \eqref{eq:system} by replacing its line (or lines)
by derivatives, as provided by the equality \eqref{eq:2_root}, see Example \ref{ex:double_root} below. Such a modified system remains non-singular too because, roughly, the derivative represents the linear mapping, see \cite[Lem 4.3]{AG} for more detailed proof.

The system \eqref{eq:system} may require modifications not only due to the multiple roots. If $\mathbb{P}(X-c\theta>i)=1$ for some $i>-m$ under assumption $\mathbb{P}(c\theta=m)>0$, i.e. the values of the random variable $X$ (with positive probability) start by some value which is greater than zero, we then shall rebuild the system \eqref{eq:system} according to the equations \eqref{eq:1_root}, including \eqref{eq:2_root} in case there are multiple roots, and \eqref{eq:2}. In other words, if $\mathbb{P}(c\theta=m)>0$ and $\mathbb{P}(X\geqslant i)=1$ for some $i\in\mathbb{N}$, the modification is necessary in order to avoid the zero-columns in system's matrix in \eqref{eq:system}. Let us observe that (if $\mathbb{P}(c\theta=m)>0$) $\mathbb{P}(X\geqslant m)=1$ violates the net profit condition.
\section{Lemmas}

In this section, we formulate and prove several auxiliary statements that Section \ref{sec:results} is based on.

\bigskip

\begin{lem}\label{lem:reccurence}
The ultimate time survival probability \eqref{eq:ult_time} satisfies the following recurrence relation
$$
\varphi(u)=\sum_{i=1}^{u+m}\vphi(i)f(u-i),\,u\in\mathbb{N}_0.
$$
\end{lem}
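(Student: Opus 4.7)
The plan is to prove the recurrence by conditioning on the value of the first increment $X_1-c\theta_1$ of the random walk and exploiting the i.i.d.\ structure of its subsequent increments.

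First I would write $\varphi(u)=\mathbb{P}\bigl(\sup_{n\geqslant 1}S_n<u\bigr)$ with $S_n=\sum_{i=1}^{n}(X_i-c\theta_i)$, and partition the event according to the value $S_1=j$, where under assumptions (a) and (b) the random variable $S_1=X_1-c\theta_1$ is integer-valued with support in $\{-m,-m+1,\ldots\}$. By the total probability formula,
\begin{equation*}
\varphi(u)=\sum_{j=-m}^{\infty}f(j)\,\mathbb{P}\Bigl(\sup_{n\geqslant 1}S_n<u\,\Big|\,X_1-c\theta_1=j\Bigr),
\end{equation*}
and clearly only the terms with $j<u$ can contribute a nonzero probability, because $\sup_{n\geqslant 1}S_n\geqslant S_1=j$.

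Second, on the event $\{X_1-c\theta_1=j\}$ with $j<u$, I would use $S_n=j+\widetilde S_{n-1}$ for $n\geqslant 2$, where $\widetilde S_k:=\sum_{i=1}^{k}(X_{i+1}-c\theta_{i+1})$ is independent of $X_1-c\theta_1$ and satisfies $\widetilde S_k\stackrel{d}{=} S_k$. Thus
\begin{equation*}
\sup_{n\geqslant 1}S_n=\max\!\Bigl(j,\,j+\sup_{k\geqslant 1}\widetilde S_k\Bigr)=j+\Bigl(\sup_{k\geqslant 1}\widetilde S_k\Bigr)^{+}\!\!,
\end{equation*}
so that, conditionally, $\{\sup_{n\geqslant 1}S_n<u\}$ coincides with $\{\sup_{k\geqslant 1}\widetilde S_k<u-j\}$, whose probability equals $\varphi(u-j)$ since $u-j\in\mathbb{N}$.

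Third, substituting back yields $\varphi(u)=\sum_{j=-m}^{u-1}f(j)\varphi(u-j)$, and the change of variables $i=u-j$ (so $j=u-i$, and $j$ runs from $-m$ up to $u-1$ iff $i$ runs from $u+m$ down to $1$) produces exactly
\begin{equation*}
\varphi(u)=\sum_{i=1}^{u+m}\varphi(i)f(u-i),\qquad u\in\mathbb{N}_0.
\end{equation*}

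The main obstacle is a cosmetic one: being careful that the restriction $j<u$ arises naturally (since $\varphi(v)$ is defined only for $v\in\mathbb{N}_0$), and that the lower bound $j\geqslant -m$ comes from assumption (b) which guarantees $f(j)=0$ for $j<-m$; these together give the finite range $1\leqslant i\leqslant u+m$ in the stated recurrence. The case $u=0$ is a special check: the sum becomes $\sum_{i=1}^{m}\varphi(i)f(-i)$, which is consistent with the identity $\varphi(0)=\mathbb{P}(\sup_{n\geqslant 1}S_n<0)$ and the fact that the first step must then satisfy $X_1-c\theta_1\in\{-m,\ldots,-1\}$.
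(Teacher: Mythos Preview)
Your proof is correct and follows essentially the same approach as the paper: condition on the first increment $X_1-c\theta_1$, use the i.i.d.\ structure to identify the conditional probability with $\varphi(u-j)$, restrict to $-m\leqslant j\leqslant u-1$, and reindex by $i=u-j$. Your write-up is slightly more explicit (introducing $\widetilde S_k$ and the identity $\sup_{n\geqslant 1}S_n=j+(\sup_{k\geqslant 1}\widetilde S_k)^+$), but the argument is the same.
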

\begin{proof}
Since the random variables $X_1-c\theta_1,\,X_2-c\theta_2,\,\ldots$ are independent and identically distributed, and $\mathbb{P}(X-c\theta\geqslant -m)=1$, then
\begin{align*}
\vphi(u)&=\mathbb{P}\left(\bigcap_{n=1}^{\infty}\left\{\sum_{i=1}^{n}(X_i-c\theta_i)<u\right\}\right)\\
&=\mathbb{P}\left(\bigcap_{n=2}^{\infty}\left\{X_1-c\theta_1+\sum_{i=2}^{n}(X_i-c\theta_i)<u\right\},\,X_1-c\theta_1<u\right)\\
&=\sum_{i=-m}^{u-1}\vphi(u-i)f(i)=\sum_{i=1}^{u+m}\vphi(i)f(u-i).
\end{align*}
\end{proof}

\begin{lem}\label{lem:determ}
Let $A$ be the system's matrix from \eqref{eq:system} and denote $f^m(-m)=\left(\mathbb{P}(X-c\theta=-m)\right)^m$. Then, the determinant of $A$ is
$$
|A|=(-1)^{m-1}f^{m}(-m)\prod_{j=1}^{m-1}(\al_{j}-1)
\prod_{1\leqslant i<j\leqslant m -1}\left(\al_j-\al_i\right).
$$
Moreover, the minors of the last line of $A$ are:
\begin{align*}
M_{m,\,1}&=f^{m-1}(-m)\prod_{i=1}^{m-1}\al_i\prod_{1\leqslant i<j\leqslant m-1}(\al_j-\al_i),\\
M_{m,\,2}&=f(-m)^{m-1}\prod_{1\leqslant i<j\leqslant m-1}(\al_j-\al_i)
\sum_{1\leqslant  j_1<\ldots<j_{m-2}\leqslant m-1}
\al_{j_1}\cdots\al_{j_{m-2}}\\
&+\frac{F(-m+1)}{f(-m)}M_{\ka,\,1},\\
M_{m,\,3}&=f(-m)^{m-1}\prod_{1\leq i<j\leq m-1}(\al_j-\al_i)
\sum_{1\leq  j_1<\ldots<j_{m-3}\leq m-1}
\al_{j_1}\cdots\al_{j_{m-3}},\\
&-\frac{F(-m+2)}{f(-m)}M_{\ka,\,1}
+\frac{F(-m+1)}{f(-m)}M_{\ka,\,2},\\
&\vdots\\
M_{m,\,m}&=f(-m)^{m-1}\prod_{1\leq i<j\leq m-1}(\al_j-\al_i)
+(-1)^{m}\frac{F(-1)}{f(-m)}M_{m,\,1}\\
&+(-1)^{m+1}
\frac{F(-2)}{f(-m)}M_{m,\,2}
+\ldots
+(-1)^{2m-1}\frac{F(-m+2)}{f(-m)}M_{\ka,\,\ka-2}\\
&+\frac{F(-m+1)}{f(-m)}M_{m,\,m-1}.
\end{align*}
\end{lem}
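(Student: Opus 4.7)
The plan is to exploit the factorization $A = V \cdot U$, where $V$ is the $m \times m$ Vandermonde matrix with nodes $(\al_1, \ldots, \al_{m-1}, 1)$ (so $V_{k, l} = \al_k^l$ for $k = 1, \ldots, m-1$ and $V_{m, l} = 1$), and $U$ is the $m \times m$ lower-triangular Toeplitz matrix defined by $U_{l, j} = F(-m + l - j)$ for $l \geq j$ and $U_{l, j} = 0$ otherwise, so that $U_{j, j} = f(-m)$ on the diagonal. For $k \leq m-1$, a direct expansion gives
\[
(VU)_{k, j} \;=\; \al_k^j \sum_{p=0}^{m-1-j} \al_k^p F(-m + p) \;=\; g_j(\al_k),
\]
matching $A_{k, j+1}$. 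For the last row, summation by parts (using $f(-l) = F(-l) - F(-l-1)$ and $F(-m-1) = 0$) yields the key identity $\sum_{l=j+1}^m (l - j) f(-l) = \sum_{l=j+1}^m F(-l)$, which equals $(VU)_{m, j} = \sum_{p=0}^{m-1-j} F(-m+p)$ after the substitution $p = m - l$. Hence $A = VU$, so $\det A = \det V \cdot \det U$; since the Vandermonde determinant of $(\al_1, \ldots, \al_{m-1}, 1)$ factors as $(-1)^{m-1} \prod_{i=1}^{m-1}(\al_i - 1) \prod_{1 \leq i < j \leq m-1}(\al_j - \al_i)$ and $\det U = f(-m)^m$ by triangularity, the claimed formula for $|A|$ follows.

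For the minors, removing the last row of $A$ leaves $V' U$ (where $V'$ is the first $m-1$ rows of $V$); further removing the $(j+1)$-st column yields $V' \tilde U_j$, where $\tilde U_j$ is $U$ with 0-based column $j$ deleted. The Cauchy--Binet formula then gives
\[
M_{m, j+1} \;=\; \sum_{t=0}^{m-1} \det\!\bigl(V'_{*, S_t}\bigr) \det\!\bigl(\tilde U_{j,\, S_t,\, *}\bigr), \qquad S_t = \{0, \ldots, m-1\} \setminus \{t\}.
\]
The first determinant equals $e_{m-1-t}(\al_1, \ldots, \al_{m-1}) \prod_{1 \leq i < i' \leq m-1}(\al_{i'} - \al_i)$, where $e_r$ denotes the $r$-th elementary symmetric polynomial (this is the standard Schur identity for a Vandermonde matrix with a missing power). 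The second is the $(t, j)$-minor of $U$; using $\det(U_{\hat t, \hat j}) = (-1)^{t+j} (U^{-1})_{j, t} \det U$ together with the lower-triangular Toeplitz form of $U^{-1}$ (whose coefficients $w_0 = 1/f(-m), w_1, w_2, \ldots$ satisfy $\sum_{s=0}^u F(-m+s) w_{u-s} = 0$ for $u \geq 1$, from $UU^{-1} = I$), it vanishes for $t > j$ and equals $(-1)^{t+j} w_{j-t} f(-m)^m$ for $t \leq j$. Reindexing by $r = j - t$ collects the answer into
\[
M_{m, j+1} \;=\; f(-m)^m \prod_{1 \leq i < i' \leq m-1}(\al_{i'} - \al_i) \sum_{r=0}^{j} (-1)^r w_r\, e_{m-1-j+r}(\al_1, \ldots, \al_{m-1}),
\]
whose $r = 0$ summand (using $w_0 = 1/f(-m)$) is precisely the leading term $f(-m)^{m-1} \prod(\al_{i'} - \al_i) e_{m-1-j}(\al)$ stated in the lemma.

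To reach the recursive form involving $M_{m, 1}, \ldots, M_{m, j}$, substitute the analogous closed form for each $M_{m, j+1-r}$ into $\sum_{r=1}^{j} (-1)^{r-1} \frac{F(-m+r)}{f(-m)} M_{m, j+1-r}$ and re-sum by the change of index $u = r + s$; the inner sum $\sum_{r=1}^{u} F(-m+r) w_{u-r}$ then collapses to $-f(-m) w_u$ by the Toeplitz identity above, leaving exactly the $r \geq 1$ part of the closed form for $M_{m, j+1}$. An induction on $j$, with base case $M_{m,1} = f(-m)^{m-1} \prod_{i=1}^{m-1} \al_i \prod_{1 \leq i < i' \leq m-1}(\al_{i'} - \al_i)$ (the $j = 0$ instance of the closed form), then establishes all the stated recursive minor formulas. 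The main obstacle will be this last telescoping step: the signs and indices must be tracked carefully, and the Vandermonde-with-missing-power identity invoked above (while classical) deserves a brief verification, e.g. via an elementary column operation.
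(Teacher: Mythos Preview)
Your proof is correct and takes a genuinely different route from the paper's. The paper (in the published version) simply remarks that the computation is technical and points to \cite[Lem.~4.2]{AG}; the commented-out draft proof proceeds by successive elementary column operations, peeling off $f(-m)$ from the rightmost column, subtracting multiples of it from the columns to its left, and iterating until a pure Vandermonde block emerges. That method never names the factorisation, and the minors are obtained by repeating the same reductions on each submatrix.

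Your approach instead identifies the factorisation $A=VU$ explicitly (Vandermonde with nodes $\al_1,\ldots,\al_{m-1},1$ times a lower-triangular Toeplitz built from the $F(-m+j)$), which makes $|A|=\det V\cdot\det U$ immediate. For the minors you apply Cauchy--Binet to $V'\tilde U_j$, invoke the ``Vandermonde with one power missing'' identity $D_t=e_{m-1-t}(\al)\prod_{i<i'}(\al_{i'}-\al_i)$, and read off the $(t,j)$-minor of $U$ via the adjugate and the lower-triangular Toeplitz form of $U^{-1}$. This yields a closed formula for each $M_{m,j+1}$ as a finite sum in the coefficients $w_r$ of $U^{-1}$ and the elementary symmetric polynomials $e_{m-1-j+r}(\al)$, from which the recursive expressions of the lemma are recovered by the telescoping you describe.

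What each approach buys: the paper's column-operation argument is completely elementary and requires no machinery beyond determinant row/column manipulation, but it is repetitive and leaves the structure implicit. Your argument is shorter once the tools are in hand, exposes why the elementary symmetric polynomials appear, and gives the \emph{closed} form for $M_{m,j+1}$ as a bonus before specialising to the recursion; on the other hand it leans on Cauchy--Binet and the missing-power Vandermonde identity, which a reader of this paper may need reminded of. Your caveats about careful sign-tracking and a short verification of the $D_t=e_{m-1-t}(\al)\prod(\al_{i'}-\al_i)$ identity (e.g.\ by expanding $\det\bigl(\al_i^l\bigr)_{l=0}^{m-1}$ augmented by a row $(1,x,\ldots,x^{m-1})$ along that extra row) are well placed; once those are filled in, the argument is complete.
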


\begin{proof}
The proof is pretty technical and is based on elementary calculations of the determinants, see the reference \cite[Lem 4.2 and pp. 5192-5193]{AG} where the same determinants with $\theta\equiv1$ and $c\in\mathbb{N}$ are calculated more extensively.
\end{proof}

\begin{lem}\label{lem:roots_number}
Let $s\in\mathbb{C}$. For the non-negative, independent and integer-valued random variables $X$ and $c\theta$ assume $\mathbb{P}(c\theta\leqslant m)=1$ and $\mathbb{P}(X-c\theta=-m)>0$, i.e. $\mathbb{P}(X=0)>0$ and $\mathbb{P}(c\theta=m)>0$. Then, there are exactly $m-1$ roots, counted with their multiplicities, of
\begin{align}\label{eq:gen_1_roots}
G_{X-c\theta}(s)=1
\end{align}
in $|s|\leqslant 1$, $s\neq\{0,\,1\}$.

\end{lem}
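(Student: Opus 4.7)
The plan is to convert $G_{X-c\theta}(s)=1$ into a zero-counting problem for a function analytic on the open unit disk, and to apply Rouché's theorem combined with a contraction-plus-Hurwitz step to get around the fact that the obvious estimate on $|s|=1$ is non-strict.

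First, I would clear the negative powers arising from $G_{c\theta}(1/s)$ by setting $q(s):=s^m G_{c\theta}(1/s)=\sum_{k=0}^{m}\mathbb{P}(c\theta=k)\,s^{m-k}$, a polynomial in $s$ with non-negative coefficients and $q(1)=1$. Then $G_{X-c\theta}(s)=1$ for $s\neq 0$ is equivalent to $H(s):=G_X(s)q(s)-s^m=0$, where $H$ is analytic on $|s|<1$, continuous on $|s|\leq 1$, and satisfies $H(0)=\mathbb{P}(X=0)\mathbb{P}(c\theta=m)>0$ by hypothesis. In particular the auxiliary factor $s^m$ introduces no spurious zero at the origin, so the roots of $H$ in $|s|\leq 1$ correspond bijectively, with multiplicities, to roots of $G_{X-c\theta}(s)=1$ in $0<|s|\leq 1$.

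Next I would count the roots of $H$ in the closed unit disk. On $|s|=1$ the non-negativity of the coefficients of $G_X$ and $q$ only yields $|G_X(s)q(s)|\leq G_X(1)q(1)=1=|s^m|$, with equality at $s=1$, so Rouché's theorem is not directly applicable. I would instead introduce the family $H_\lambda(s):=s^m-\lambda G_X(s)q(s)$ for $\lambda\in(0,1)$: on $|s|=1$ one has the strict bound $|\lambda G_X(s)q(s)|\leq\lambda<1=|s^m|$, so Rouché yields that $H_\lambda$ has exactly $m$ zeros in $|s|<1$, matching the $m$-fold zero of $s^m$ at the origin. Letting $\lambda\to 1^{-}$, the family $H_\lambda$ converges uniformly to $-H$ on $|s|\leq 1$, and Hurwitz's theorem implies that the $m$ zeros of $H_\lambda$ accumulate, with multiplicities, precisely at the zeros of $H$ in $|s|\leq 1$; since the closed disk is compact and $H\not\equiv 0$ (because $H(0)>0$), no zeros are lost in the limit. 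Hence $H$ has exactly $m$ zeros in $|s|\leq 1$.

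Finally, I would single out the root at $s=1$. Direct differentiation gives $H'(1)=G_X'(1)q(1)+G_X(1)q'(1)-m=\mathbb{E}X+(m-c\mathbb{E}\theta)-m=\mathbb{E}X-c\mathbb{E}\theta$, which is nonzero under the net profit condition assumed throughout the paper, so $s=1$ is a simple zero of $H$. Combined with $H(0)>0$, this leaves exactly $m-1$ zeros of $H$ in $0<|s|\leq 1,\,s\neq 1$, which via the correspondence from the first paragraph are the $m-1$ roots of $G_{X-c\theta}(s)=1$ claimed by the lemma. The main technical obstacle is the boundary equality on $|s|=1$ in the naive Rouché estimate; the contraction $H_\lambda$ together with Hurwitz's theorem is the step that makes the count rigorous, and care must be taken that no zero of $H_\lambda$ escapes toward a point of $|s|=1$ that is not a zero of $H$, which is controlled by the uniform convergence $H_\lambda\to -H$ on the compact closed disk.
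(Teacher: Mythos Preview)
Your proof is correct and follows essentially the same route as the paper's: clear the negative powers by multiplying through by $s^m$, then apply Rouch\'e's theorem with a perturbation parameter to force a strict inequality on $|s|=1$. The paper uses $\lambda>1$ on the $s^m$ side and reduces first to the special case $c\theta\equiv m$ before observing that $G_X(s)\,s^mG_{c\theta}(1/s)$ is again a probability-generating function, whereas you use $\lambda<1$ on the $G_X q$ side and handle the general case directly, spelling out the limiting step via Hurwitz and the simplicity of the root at $s=1$ (through the net profit condition) more explicitly than the paper's sketch does.
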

\begin{proof}
We shall follow the reference \cite[Sec. 4]{GJ}. Let us temporarily assume $\mathbb{P}(c\theta=m)=1$. Then, the equation \eqref{eq:gen_1_roots} becomes
$$
G_X(s)=s^m.
$$
Since $\mathbb{P}(X=0)>0$, the latter equation has exactly $m-1$ roots in $0<|s|\leqslant1$, $s\neq1$, counted with their multiplicities. This fact is implied by (i) the fundamental theorem of algebra, which states that (without loosing the generality) the equation $s^m=1$ has $m$ roots counted with their multiplicities (or $m-1$ not counting $s=1$), (ii) the estimate 
$$
|G_X(s)|\leqslant 1 < |\la s^m|,\,\la>1,
$$
and (iii) Rouché's theorem, see \cite{Conway}.
Then, declining the requirement $\mathbb{P}(c\theta=m)=1$, we can rewrite the equation \eqref{eq:gen_1_roots} as follows
\begin{align*}
\left(\mathbb{P}(X=0)+\mathbb{P}(X=0)s+\ldots\right)\left(\mathbb{P}(c\theta=0)+\frac{\mathbb{P}(c\theta=1)}{s}+\ldots+\frac{\mathbb{P}(c\theta=m)}{s^m}\right)=1.
\end{align*}
Multiplying both sides of the last equation by $s^m$, we obtain
\begin{align*}
\left(\mathbb{P}(X=0)+\mathbb{P}(X=0)s+\ldots\right)\left(\mathbb{P}(c\theta=0)s^m+\mathbb{P}(c\theta=1)s^{m-1}+\ldots+\mathbb{P}(c\theta=m)\right)=s^m,
\end{align*}
where the left-hand side of the last equation is the probability-generating function of the non-negative discrete and integer-valued distribution and has the same number of roots in $|s|\leqslant1$, $s\neq\{0,\,1\}$ as in the previous case when $\mathbb{P}(c\theta=m)=1$ was assumed.  
\end{proof}

\section{Proofs of the main results}\label{sec:proofs}

In this section, we prove all of the statements formulated in Section \ref{sec:results}.

\begin{proof}[Proof of Proposition \ref{thm:fin_time}]
The proof is pretty evident. If $T=1$, then
$$
\vphi(u,\,1)=\int\limits_{(-\infty,\,u)}\,d\,\tilde{F}(x)=\mathbb{P}(X-c\theta<u).
$$
If $T=2$, then
\begin{align*}
\vphi(u,\,2)&=\mathbb{P}\left(\{X_1-c\theta_1<u\}\cap\{X_1-c\theta_1+X_2-c\theta_2<u\}\right)\\
&=\int\limits_{(-\infty,\,u)}\mathbb{P}(X-c\theta<u-x)\,d\,\mathbb{P}(X-c\theta<x)
=\int\limits_{(-\infty,\,u)}\tilde{F}^{*1}(u-x)\,d\,\tilde{F}(x)
\end{align*}
and, by mathematical induction,
\begin{align*}
\vphi(u,\,T)=\int\limits_{(-\infty,\,u)}\tilde{F}^{*(T-1)}(u-x)\,d\,\tilde{F}(x).
\end{align*}
\end{proof}

\begin{proof}[Proof of Corollary \ref{cor:fin_t}]
The expression of $\varphi(u,\,1)$ is evident. The formula of $\varphi(u,\,T)$ for $T\in\{2,\,3,\,\ldots\}$ is implied as follows
\begin{align*}
\varphi(u,\,T)&=\mathbb{P}\left(\bigcap\limits_{n=1}^{T}\left\{\sum_{i=1}^{n}(X_i-c\theta_i)<u\right\}\right)\\
&=\mathbb{P}\left(\bigcap\limits_{n=2}^{T}\left\{\sum_{i=1}^{n}(X_i-c\theta_i)<u\right\}\cap\{X_1-c\theta_1<u\}\right)\\
&=\sum_{k=-m}^{u-1}\mathbb{P}\left(\bigcap\limits_{n=1}^{T-1}\left\{\sum_{i=1}^{n}(X_i-c\theta_i)<u-(X_1-c\theta_1)\right\}\right)\mathbb{P}(X-c\theta=k)\\
&=\sum_{k=-m}^{u-1}\varphi(u-k,\,T-1)f(k).
\end{align*}
\end{proof}

\begin{proof}[Proof of Theorem \ref{thm:main_thm}]
The equality $\M\dist\left(\M+X-c\theta\right)^+$ (see eq. \eqref{eq:dist} above) implies the equality of the probability-generating functions of the underlying distributions. Then, applying the rule of total expectation when having in mind that $\T (X-c\theta\geqslant-m)=1$ and $\mathbb{P}(\M<\infty)=1$ due to the net profit condition (see eq. \eqref{eq:lim_1} above), we obtain
\begin{align*}
\E s^\M&=\E s^{\left(\M+X-c\theta\right)^+}=\E\left(\E \left(s^{\left(\M+X-c\theta\right)^+}|\M\right)\right)\\
&=\sum_{i=0}^{m-1}\pi_i\E s^{(i+X-c\theta)^+} + \sum_{i=m}^{\infty}\pi_i \E s^{i+X-c\theta}\\
&=\sum_{i=0}^{m-1}\pi_i\left(\sum_{j=-m}^{-i}f(j)+\sum_{j=-i+1}^{\infty}s^{i+j}f(j)\right)
+\E s^{X-c\theta}  \sum_{i=m}^{\infty}\pi_i s^i\\
&=\sum_{i=0}^{m-1}\pi_i\left(\sum_{j=-m}^{-i}f(j)+\sum_{j=-i+1}^{\infty}s^{i+j}f(j)\right)
+G_{X-c\theta}(s)\left(G_{\M}(s)-\sum_{i=0}^{m-1}\pi_i s^i\right)\\
&=\sum_{i=0}^{m-1}\pi_i\left(\sum_{j=-m}^{-i}f(j)+s^i\sum_{j=-i+1}^{\infty}s^{j}f(j)-s^i\sum_{j=-m}^{\infty}s^jf(j)\right)
+G_{X-c\theta}(s)G_{\M}(s)\\
&=\sum_{i=0}^{m-1}\pi_i\sum_{j=-m}^{-i-1}f(j)(1-s^{i+j})+
G_{\M}(s)G_{X}(s)G_{c\theta}(1/s),
\end{align*}
which implies
\begin{align}\label{eq:1_in_proof}
\sum_{i=0}^{m-1}\pi_i\sum_{j=i+1}^{m}f(-j)(1-s^{i-j})
=G_{\M}(s)(1-G_X(s)G_{c\theta}(1/s))
\end{align}
and the equality \eqref{eq:1} is proved. The second Theorem's equality \eqref{eq:2} is implied by the proven equality \eqref{eq:1_in_proof} by taking the derivative of both sides with respect to $s$ and letting $s\to1^{-}$, see \cite[Lem. 9]{GJS} and \cite[p. 5191]{AG}. 
\end{proof}

\bigskip

\begin{proof}[Proof of Corollary \ref{cor:1}]
If $|\al|\leqslant 1$, $\al\neq\{0,\,1\}$ is the root of
\begin{align*}
G_X(s)G_{c\theta}\left(\frac{1}{s}\right)=1,
\end{align*}
the equality \eqref{eq:1} implies
\begin{align}\label{eq:in_cor}
\sum_{i=0}^{m-1}\pi_i\sum_{j=i+1}^{m}f(-j)(1-s^{i-j})=0.
\end{align}
Since 
$$
(1-s^{i-j})=\left(1+s^{-1}+\ldots+s^{i-j+1}\right)(1-s^{-1}),
$$
we multiply the both sides of \eqref{eq:in_cor} by $1-s^{-1}$, $s\neq0$ and obtain
\begin{align*}
&(1-s^{-1})\sum_{i=0}^{m-1}\pi_i\sum_{j=i+1}^{m}f(-j)(1-s^{i-j})=\sum_{i=0}^{m-1}\pi_i\sum_{j=i+1}^{m}f(-j)\sum_{l=0}^{-i+j-1}s^{-j}\\
&=\sum_{i=0}^{m-1}\pi_i\sum_{l=0}^{m-i-1}s^{-l}\sum_{j=i+1+l}^{m}f(-j)
=\sum_{i=0}^{m-1}\pi_i\sum_{l=0}^{m-i-1}s^{-l}F(-i-1-l)\\
&=s^{-m+1}\sum_{i=0}^{m-1}\pi_i\sum_{j=0}^{m-1-i}s^{j+i}F(-m+j)=0.
\end{align*}
\end{proof}

\begin{proof}[Proof of Theorem \ref{thm:exact}]
As the roots $\al_1,\,\al_2,\,\ldots,\,\al_{m-1}$ are assumed to be simple and $f(-m)>0$, the system \eqref{eq:system} has the solution $(\pi_0,\,\pi_1,\,\ldots,\,\pi_{m-1})$ where
$$
\pi_{i}=\frac{(-1)^{m+i+1}M_{m,\,i+1}}{|A|}(\E (c\theta-X)),\,i=0,\,1,\,\ldots,\,m-1
$$
and the minors $M_{m,\,1},\,M_{m,\,2},\,\ldots,\,M_{m,\,m}$, including determinant $|A|$, are given in Lemma \ref{lem:determ} above. Then
$$
\varphi(i+1)=\sum_{j=0}^{i}\pi_i,\,i=0,\,1,\,\ldots,\,m-1,
$$
due to eq. \eqref{eq:phi_pi}. The formula of $\varphi(0)$ is implied by the recurrence \eqref{eq:main_rec} and the expression of $\pi_{m-1}$. Indeed,
\begin{align*}
\vphi(0)&=\sum_{i=1}^{m}\vphi(i)f(-i)=\sum_{i=1}^{m}f(-i)\sum_{j=0}^{i-1}\pi_j=\sum_{j=0}^{m-1}\pi_j\sum_{i=j+1}^{m}f(-i)=
\sum_{j=0}^{m-1}\pi_jF(-j-1)\\
&=(-1)^{m+1}\,\E(c\theta-X)\,\prod_{j=1}^{m-1}\frac{1}{\al_j-1}.
\end{align*}
The formula \eqref{eq:reccurence} is the rearranged version of \eqref{eq:main_rec}.

\end{proof}

\section{Numerical examples}\label{sec:examples}

In this section, we demonstrate some numerical outputs of statements formulated in the previous Section \ref{sec:results} when $X$ and $c\theta$ admit some chosen distributions. All the necessary calculations and visualizations are performed using the software \cite{Mathematica}.

\begin{ex}
Let $\mathbb{P}(X=0)=\mathbb{P}(X=1)=1/2$, $X\dist\theta$ and $c=2$. We give the exact values of $\vphi(0),\,\vphi(1),\,\vphi(2)$ and $\vphi(3)$ and provide an algorithm for arbitrary $\vphi(u)$ $u\in\mathbb{N}_0$ calculation.
\end{ex}

\bigskip

Let us observe the validity of the net profit condition $\mathbb{E}(2\theta-X)=1/2>0$. Then
$$
G_{X-2\theta}(s)=\left(\frac{1}{2}+\frac{s}{2}\right)\left(\frac{1}{2}+\frac{1}{2s^2}\right)=1 \quad \Rightarrow \quad s=1-\sqrt{2}=:\alpha
$$
and, according to Theorem \ref{thm:exact}, 
\begin{align*}
&\varphi(0)=\frac{\sqrt{2}}{4}\approx 0.354,\\
&\varphi(1)=2-\sqrt{2}\approx 0.586,\\
&\vphi(2)=2(\sqrt{2}-1)\approx 0.828,\\
&\vphi(3)=8-5\sqrt{2}\approx 0.929.
\end{align*}
We can proceed using the recurrence \eqref{eq:reccurence}
$$
\vphi(u)=4\left(\vphi(u-2)-\sum_{i=1}^{u-1}\vphi(i)f(u-2-i)\right),\,
u=2,\,3,\,\ldots,
$$
or employ the survival probability-generating function
$$
\Xi(s)=\frac{2-\sqrt{2}+\sqrt{2}s}{1+s-3s^2+s^3},\,s\neq \al,
$$
see Theorem \ref{thm:main_thm}, i.e.
$$
\vphi(u+1)=\frac{1}{u!}\lim_{s\to 0}\frac{d^u}{ds^u}\Xi(s),\,u\in\mathbb{N}_0.
$$

\begin{ex}
Let $p=1/2$,
$$
\mathbb{P}(X=k)=(1-p)^kp,\,k=0,\,1,\,\ldots
$$ 
and 
$$
\mathbb{P}(c\theta=k)=\binom{4}{k}p^k(1-p)^{4-k},\,k=0,\,1,\,\ldots,\,4,
$$
i.e. we assume the claim amount $X$ is geometric and the inter-occurrence time $\theta$ multiplied by premium $c$ is binomial distributed with the provided parameters. We give the exact values of $\vphi(0),\,\vphi(1),\,\vphi(2)$, $\vphi(3)$, $\vphi(4)$ and provide an algorithm for arbitrary $\vphi(u)$ $u\in\mathbb{N}_0$ calculation.
\end{ex}

\bigskip

Let us observe that the net profit condition is valid $\mathbb{E}(c\theta-X)=1>0$. Then, the equation
\begin{align*}
G_{X-c\theta}(s)=\frac{p}{1-(1-p)s}\left(1-p+\frac{p}{s}\right)^4=1,\,p=\frac{1}{2},
\end{align*}
has three roots inside the unit circle: 
\begin{align*}
\al_1 &= -0.15434 + 0.342115 i,\\ 
\al_2 &= -0.15434 - 0.342115 i,\\
\al_3 &=-0.289014.
\end{align*}
Note that the complex roots always appear in conjugate pairs, see \cite[Rem. 10]{GJS}.

According to Theorem \ref{thm:exact},
\begin{align*}
\vphi(0)&=-\prod_{j=1}^{3}\frac{1}{\al_j-1}=0.535194,\,
\vphi(1)=32\prod_{j=1}^{3}\frac{\al_j}{\al_j-1}=0.697233,\\
\vphi(2)&=32\left(-\frac{11}{64}\varphi(1)+\frac{\al_1\al_2\al_3-\al_1\al_2-\al_1\al_3-\al_2\al_3}{(\al_1-1)(\al_2-1)(\al_3-1)}\right)=0.802783,\\
\vphi(3)&=32\left(-\frac{11}{64}\vphi(2)-\frac{55}{128}\vphi(1)+\frac{\al_1\al_2\al_3-\al_1\al_2-\al_1\al_3-\al_2\al_3+\al_1+\al_2+\al_3}{(\al_1-1)(\al_2-1)(\al_3-1)}\right)\\
&=0.871536 
\end{align*}
We can proceed using the recurrence \eqref{eq:reccurence}
$$
\vphi(u)=32\left(\vphi(u-4)-\sum_{i=1}^{u-1}\vphi(i)f(u-4-i)\right),\,
u=4,\,5,\,\ldots,
$$
which gives 
$$
\varphi(4)=32\left(\vphi(0)-\vphi(1)\cdot\frac{65}{256}-\vphi(2)\cdot\frac{33}{128}-\vphi(3)\cdot\frac{9}{64}\right)=0.916321,
$$
or employ the survival probability-generating function (see Theorem \ref{thm:main_thm})
$$
\Xi(s)=\frac{0.0435771 + 0.224482 s + 0.516629 s^2 + 0.750506 s^3 - 0.535194 s^4}{0.0625 + 0.25 s + 0.375 s^2 + 0.25 s^3 - 1.9375 s^4 + s^5},\,s\neq \al_1,\,\al_2,\,\al_3,
$$
where $\pi_0,\,\pi_1,\,\pi_2,\,\pi_3$ can be obtained from $\vphi(0),\,\vphi(1),\,\vphi(2)$, $\vphi(3)$, $\vphi(4)$ or directly by Theorem \ref{thm:exact}.

\bigskip

\begin{ex}\label{ex:double_root}
Let $\mathbb{P}(c\theta=1)=p=1-\mathbb{P}(c\theta=3)$ and
$$
\mathbb{P}(X=0)=\frac{-1+p+\sqrt{1-p}}{2p}=1-\mathbb{P}(X=1),\,0<p\leqslant1.
$$
We find $\varphi(u),\,u\in\mathbb{N}_0$ and the survival probability generating function $\Xi(s)$, $|s|<1$.
\end{ex}

\bigskip

It is easy to see that the net profit condition holds for all $0< p <1$. The equation
$$
\left(\frac{-1+p+\sqrt{1-p}}{2p}+\frac{1+p-\sqrt{1+p}}{2p}s\right)
\left(\frac{p}{s}+\frac{1-p}{s^3}\right)=1
$$
has a double root
$$
s=-\frac{1-p}{1-p+\sqrt{1-p}}:=\al,\,0<p<1.
$$
inside the unit circle. We now setup the modified system \eqref{eq:system} by replacing its second line with derivative
\begin{align*}
\begin{pmatrix}
F(-3)+\al F(-2)+\al^2F(-1)&F(-3)\al+F(-2)\al^2&\al^2f(-3)\\
F(-2)+2\al F(-1)&F(-3)+2\al F(-2)&2\al f(-3)\\
f(-1)+2f(-2)+3f(-3)&f(-2)+2f(-3)&f(-3)
\end{pmatrix}
\begin{pmatrix}
\pi_0\\
\pi_1\\
\pi_2
\end{pmatrix}=
\begin{pmatrix}
&0\\
&0\\
&\mathbb{E}(c\theta-X)
\end{pmatrix},
\end{align*}
where the involved probability mass function and expectation are described by the assumed distribution of $X-c\theta$. The provided system has a solution $(\pi_0,\,\pi_1,\,\pi_2)=(1,\,0,\,0)$, which implies $\varphi(u)=1$ for all $u\in\mathbb{N}$ and $\Xi(s)=1/(1-s)$, $|s|<1$, while the recurrence \eqref{eq:reccurence} gives 
$$
\vphi(0)=\sum_{i=1}^{3}\vphi(i)f(-i)=f(-1)+f(-2)+f(-3)=1-f(0)=
\frac{1-p+\sqrt{1-p}}{2}
$$
and that is natural because under the assumed distributions the ruin can appear when $X=1$, $c\theta=1$ and $u=0$ only.

\bigskip

\begin{ex}\label{ex:puas}
Let $X\sim\mathcal{P}(1)$ and $c\theta\sim{P}(1.01)$ be Poisson distributed with parameters $\la=1$ and $\la=1.01$, i.e.
$$
\mathbb{P}(X=k)=\frac{e^{-1}}{k!},\,\mathbb{P}(c\theta=k)=e^{-1.01}\frac{1.01^k}{k!},\,k=0,\,1,\,\ldots
$$
We truncate $c\theta$ to have the finite support and calculate the exact survival probabilities $\varphi(0),\,\varphi(1),\,\ldots,\,\varphi(10)$. 
\end{ex}

\bigskip

As the proposed algorithm for the ultimate time survival probability calculation does not allow the inter-occurrence time of infinite support, for the illustrative purposes we define two new random variables $c\theta_{10}$ and $c\theta_{15}$:
\begin{align}\label{trunc_10}
&\mathbb{P}(c\theta_{10}=k)=\mathbb{P}(c\theta=k),\,k=0,\,1,\,\ldots,\,9,\,
\mathbb{P}(c\theta_{10}=10)=\mathbb{P}(c\theta\geqslant 10),\\
\label{trunc_15}
&\mathbb{P}(c\theta_{15}=k)=\mathbb{P}(c\theta=k),\,k=0,\,1,\,\ldots,\,14,\,
\mathbb{P}(c\theta_{15}=15)=\mathbb{P}(c\theta\geqslant 15).
\end{align}
It is easy to observe $\mathbb{E}(X-c\theta)=-0.01<0$. The net profit condition remains valid for the truncated distributions $c\theta_{10}$ and $c\theta_{15}$ too:
\begin{align*}
\mathbb{E}(X-c\theta_{10})&=-0.009999988,\\
\mathbb{E}(X-c\theta_{15})&=-0.00999999999998.
\end{align*}

Then, the equation
\begin{align}\label{eq:10}
e^{s-1}\left(\sum_{k=0}^{9}\frac{\mathbb{P}(c\theta=k)}{s^k}+\frac{\mathbb{P}(c\theta\geqslant 10)}{s^{10}}\right)=1
\end{align}
has nine roots inside the unit circle, the red points in Figure \ref{fig:roots} below, while  
\begin{align}\label{eq:15}
e^{s-1}\left(\sum_{k=0}^{14}\frac{\mathbb{P}(c\theta=k)}{s^k}+\frac{\mathbb{P}(c\theta\geqslant 15)}{s^{15}}\right)=1
\end{align}
has fourteen roots, the blue points in Figure \ref{fig:roots} below.
\begin{figure}[H]
\begin{center}
\includegraphics[scale=0.75]{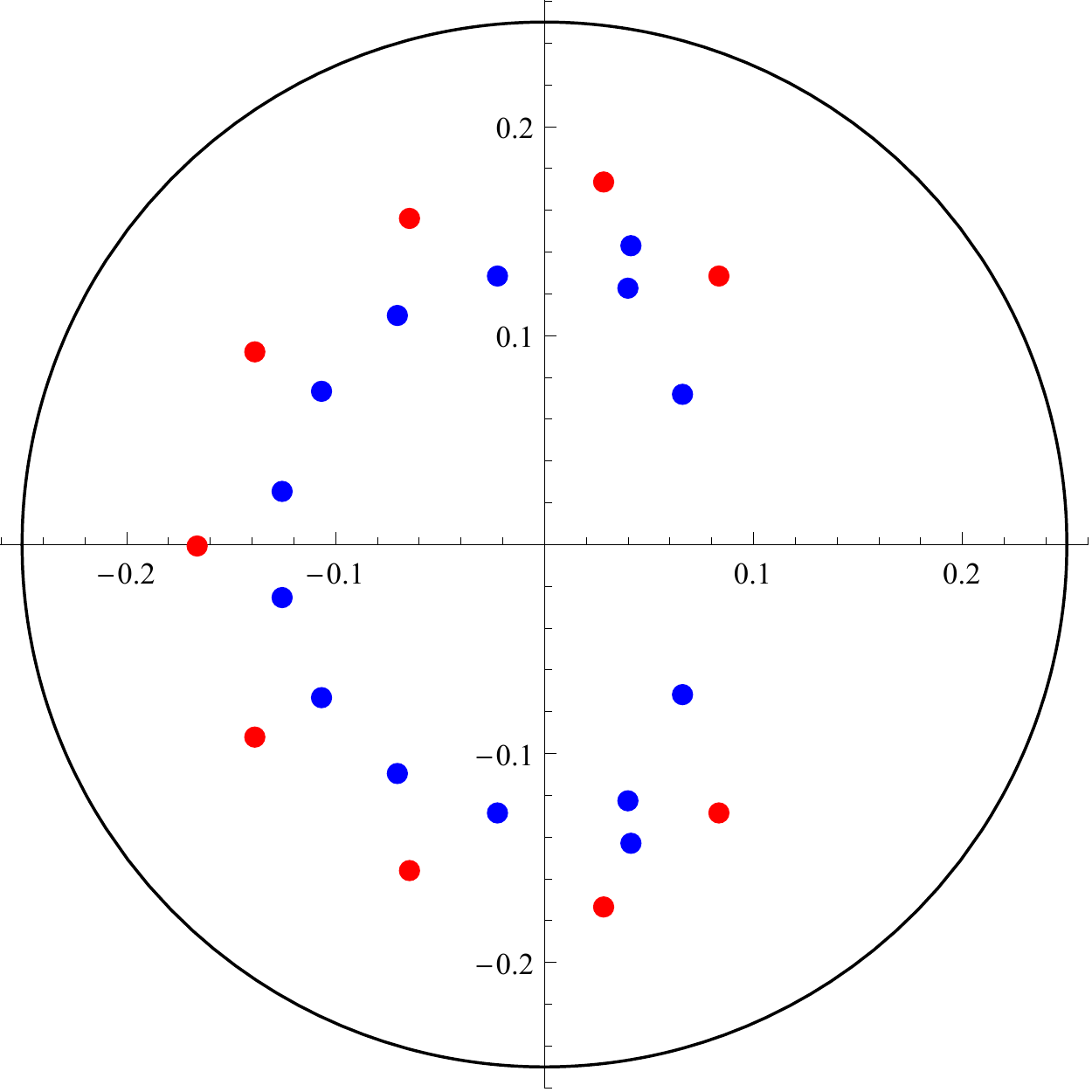}
\caption{Roots of \eqref{eq:10} (red) and \eqref{eq:15} (blue) inside the unit circle.}\label{fig:roots}
\end{center}
\end{figure}

Let us denote $\varphi_{10}(u)$ and $\varphi_{15}(u)$ the survival probabilities when the process \eqref{eq:process} is generated by $\{X,\,c\theta_{10}\}$ and $\{X,\,c\theta_{15}\}$ respectively. We then multiply the roots depicted in Figure \ref{fig:roots} as provided in Theorem \ref{thm:exact} and obtain the survival probabilities $\varphi_{10}(u)$ and $\varphi_{15}(u)$ when $u=0,\,1,\,\ldots,\,10$, see Table \ref{table1} below.

\begin{table}[h]
\centering
\begin{tabular}{|c|c|c|c|}
\hline
$u$&$\varphi_{10}(u)$&$\varphi_{15}(u)$&$\varphi_{15}(u)-\varphi_{10}(u)$\\
\hline
$0$&$0.0067795743$&$0.0067795818$&$7.5E-09$\\
\hline
$1$&$0.0145425921$&$0.0145456080$&$1.6E-08$\\
\hline
$2$&$0.0238700927$&$0.0238701187$&$2.6E-08$\\
\hline
$3$&$0.0334952018$&$0.0334952381$&$3.6E-08$\\
\hline
$4$&$0.0430669381$&$0.0430669845$&$4.6E-08$\\
\hline
$5$&$0.0525424876$&$0.0525425439$&$5.6E-08$\\
\hline
$6$&$0.0619232839$&$0.0619233499$&$6.6E-08$\\
\hline
$7$&$0.0712111444$&$0.0712112199$&$7.6E-08$\\
\hline
$8$&$0.0804070612$&$0.0804071458$&$8.5E-08$\\
\hline
$9$&$0.0895119320$&$0.0895120511$&$1.2E-07$\\
\hline
$10$&$0.0985266555$&$0.0985268429$&$1.9E-07$\\
\hline
\end{tabular}
\caption{The survival probabilities of $\varphi_{10}(u)$ and $\varphi_{15}(u)$.}\label{table1}
\end{table}
The obtained values of $\vphi_{10}(u)$ and $\vphi_{15}(u)$ in Table \ref{table1} indicate the negligible effect truncating the distribution of $c\theta$ up to $10$ in comparison to truncation up to $15$, see \eqref{trunc_10} and \eqref{trunc_15} and recall the estimates \eqref{ineq:est}.


\section{Acknowledgements} The author is grateful to Professor Jonas Šiaulys for carefully reading the first draft of the manuscript and raising many inaccuracies.

\bibliography{sn-bibliography}

\end{document}